\numberwithin{equation}{section}
\theoremstyle{plain}
\newtheorem{thm}{Theorem}[section]
\begin{document}

\begin{frontmatter}
\title{Classification under local differential privacy}
\runtitle{Classification under local differential privacy}

\begin{aug}
\author{\fnms{Thomas} \snm{Berrett}\thanksref{t1,t2,m1}\ead[label=e1]{thomas.berrett@ensae.fr}}
\and
\author{\fnms{Cristina} \snm{Butucea}\thanksref{t1,m1}\ead[label=e2]{cristina.butucea@ensae.fr}}

\thankstext{t1}{Financial support from the French National Research Agency (ANR) under the grant Labex Ecodec (ANR-11-LABEX-0047)}
\thankstext{t2}{Financial support from the French National Research Agency (ANR) under the grant ANR-17-CE40-0003 HIDITSA}
\runauthor{T. Berrett and C. Butucea}

\affiliation{CREST, ENSAE, Institut Polytechnique de Paris\thanksmark{m1} }

\address{5, avenue Henry Le Chatelier, \\
F-91120 Palaiseau, France\\
\printead{e1}\\
\phantom{E-mail:\ }\printead*{e2}}

\end{aug}

\begin{abstract}
We consider the binary classification problem in a setup that preserves the privacy of the original sample. We provide a privacy mechanism that is $\alpha-$locally differentially private and then construct a classifier based on the private sample that is universally consistent in Euclidean spaces. Under stronger assumptions, we establish the minimax rates of convergence of the excess risk and see that they are slower than in the case when the original sample is available.
\end{abstract}

\begin{keyword}[class=AMS]
\kwd[Primary ]{62G08}
\kwd[; secondary ]{62H30}
\end{keyword}

\begin{keyword}
\kwd{classification}
\kwd{local differential privacy}
\kwd{minimax rates}
\kwd{universal consistency}
\end{keyword}

\end{frontmatter}

\section{Introduction}

One of the most frequently studied problems in machine learning and statistics is to make predictions of a binary outcome $Y$ in $\{0,1\}$ given the input variable $X$ in $\mathbb{R}^d$, typically based on an independent and identically distributed sample from the distribution of $(X,Y)$; see, for example, \cite{PTPR:96} for an overview of the area. Despite the long history of this problem there are still many open problems and it remains an active topic of research. Recent work has focused on weakening commonly-made assumptions \cite{CBS2019}, studying situations in which the training data comes from a different distribution to the test data \cite{CFS2019,CW2019}, and making predictions under constraints on allowable classifiers \cite{ZWSP2013}.

In recent years, it has become clear that in certain studies there is a need to preserve the privacy of the individuals whose data is collected. As a way of formalising the problem, the framework of differential privacy, see \cite{Dwork2006} and \cite{Dwork2008}, has prevailed as a natural solution. The privacy of the individuals is protected by randomizing the original data to produce the random variables $Z_1,...,Z_n$ defined on some measurable space $(\mathcal{Z}^n, \mathcal{B}^n)$, and the following statistical analysis is based soley on $Z_1,\ldots,Z_n$ without knowledge of the original data. The randomization is performed through a Markov kernel, also known as privacy mechanism, $Q : \mathcal{B}^n \times (\mathbb{R}^d \times \{0,1\})^{\otimes n} \to [0,1]$. A privacy mechanism is called $\alpha-$differentially private, for $\alpha >0$, if
\begin{equation}
\label{Eq:Privacy}
\sup_{A \in \mathcal{B}^n} \sup_{u, u': d(u,u')=1} \frac{Q(A|u)}{Q(A|u')} \leq e^\alpha,
\end{equation}
where $d: (\mathbb{R}^d \times \{0,1\})^{\otimes n} \times (\mathbb{R}^d \times \{0,1\})^{\otimes n} \rightarrow \{0,1,2,\ldots\}$ is the Hamming distance, counting the number of positions $i \in \{1,\ldots,n\}$ such that $u_i \neq u_i'$.

 In the global, also called central, model for differential privacy, the whole sample is trusted to one person/machine in order to produce the randomized sample. However, in certain situations this is not feasible. In the setup of \emph{local} differential privacy that we consider here, the sample is randomized one at a time by different persons/machines so that the $i$th original data point $(X_i,Y_i)$ only needs to be seen by the $i$th individual. Formally, we assume the sequential structure
\[
	\{X_i,Y_i,Z_1,\ldots,Z_{i-1}\} \rightarrow Z_i \quad \text{and} \quad Z_i \perp \!\!\! \perp (X_j,Y_j) | \{X_i,Y_i,Z_1,\ldots,Z_{i-1}\} \text{ for } j \neq i.
\]
With this structure, the full conditional distribution $Q$ can be expressed in terms of the conditional distributions $Q_i: \mathcal{B} \times (\mathbb{R}^d \times \{0,1\}) \times \mathcal{Z}^{i-1} \rightarrow [0,1]$, where
\[
	Z_i | \{(X_i,Y_i)=u_i,Z_1=z_1,\ldots,Z_{i-1}=z_{i-1}\} \sim Q_i( \cdot | u_i ,z_1,\ldots,z_{i-1})
\]
for $i=1,\ldots,n$. The privacy constraint~\eqref{Eq:Privacy} then reduces to
\[
	\sup_{A \in \mathcal{B}} \sup_{z_1,\ldots,z_{i-1}} \sup_{u_i,u_i'} \frac{Q_i(A|u_i,z_1,\ldots,z_{i-1})}{Q_i(A|u_i',z_1,\ldots,z_{i-1})} \leq e^\alpha,
\]
for each $i=1,\ldots,n$; if this holds then the privacy mechanism $Q$ is said to be $\alpha-$locally differentially private ($\alpha-$LDP). Let $\mathcal{Q}_\alpha$ denote the set of $\alpha-$LDP Markov kernels.

Many local privacy mechanisms of interest are of a simpler, non-interactive, form in which $Q_i( \cdot | u_i,z_1,\ldots,z_{i-1})$ does not depend on $z_1,\ldots,z_{i-1}$. In particular, a non-interactive privacy mechanism has a product form:
$$
Q(A_1\times ... \times A_n|(u_1,...,u_n)) = \prod_{i=1}^n Q_i(A_i|u_i).
$$
Thus, a non-interactive privacy mechanism satisfies $\alpha-$LDP if and only if, for all $i=1,...,n$,
$$
\sup_{A_i \in \mathcal{B}} \sup_{u_i, u'_i} \frac{Q_i(A_i|u_i)}{Q_i(A_i|u'_i)} \leq e^\alpha.
$$

Due to more recent papers such as \cite{DJW13}, the attention of the statistical community has been attracted to this field. In~\cite{DJW13} it is shown, for example, that the minimax rate of convergence for estimating the mean of a $d$ dimensional vector is slower in the context of privatized data as opposed to the classical problem and information theoretic bounds are established that allow the construction of lower bound results in many other estimation problems. The same authors also considered in \cite{DJW18} nonparametric density estimation and showed that the minimax MISE is of order $(n \alpha^2)^{-\frac {\beta}{2\beta +2}}$ over Sobolev smooth functions, instead of the classical rate $n^{-\frac{\beta}{2\beta +1 }}$. More general linear functionals of the probability density have been considered by \cite{RS2019} and they showed that the modulus of continuity with respect to the total variation distance drives the rates in presence of privacy, instead of the Hellinger distance. Adaptation to the smoothness of both the privacy mechanism and of the associated estimator has been considered by \cite{BDKS}. They showed a double elbow effect for the adaptive estimation rates for more general integrated risks over Besov class of functions. The sparse regression model has been considered in the context of local privacy by \cite{WangXu}; they give a nearly optimal procedure in the low-dimensional sparse case and consider the case where only the responses are privatized.
To the best of our knowledge, classification under local privacy constraints has never been considered from the point of view of statistical inference. 

In the sequel, we exhibit a non-interactive $\alpha-$LDP privacy mechanism and construct a classifier that is universally consistent in Euclidean spaces. Under standard assumptions on the probability distribution of the input variables as well as on the regression function (H\"older smoothness $\beta \in [0,1]$ and margin assumption with parameter $\gamma$), similar to the assumptions in \cite{AudibertTsybakov07}, we show that the minimax rate of convergence of the excess risk over all classifiers and all (possibly interactive) $Q \in \mathcal{Q}_\alpha$ is $(n\alpha^2)^{-\frac{\beta (1+\gamma)}{2 \beta + 2d}}$, and that this rate is achieved by our non-interactive approach. This rate is slower than the minimax rate for classification $n^{-\frac{\beta (1+\gamma)}{2 \beta + d}}$ attainable when the original sample is available \cite{AudibertTsybakov07}.

\section{Main results}

Let $(X,Y)$ be a random vector taking values in $\mathbb{R}^d \times \{0,1\}$ and moreover let $(X_1,Y_1),\ldots,(X_n,Y_n)$ be independent and identically distributed copies of $(X,Y)$. Our task is to find a mechanism that outputs locally privatised data $Z_1,\ldots,Z_n$ and then to use this privatised data to construct a classifier $C_n : [0,1]^d \rightarrow \{0,1\}$ to predict $Y$ from $X$.

The performance of this classifier will be measured through its excess risk, defined as follows. If the distribution $P$ of $(X,Y)$ were known, we could calculate $\eta(x):= \mathbb{P}(Y=1 |X=x)$ and use the Bayes classifier $C^\mathrm{B}(x):= \mathbbm{1}_{\{\eta(x) \geq 1/2\}}$. This minimises the risk
\[
	R_P(C):= \mathbb{P}(C(X) \neq Y)
\]
over all classifiers $C$. The excess risk of a (data dependent) classifier $C_n$ is then defined to be
\begin{align*}
	\mathcal{E}_P(C_n) &:= R_P(C_n) - R_P(C^\mathrm{B}) = \mathbb{E} \bigl[ \{ \mathbb{P}(C_n(X)=0|X) - \mathbbm{1}_{\{\eta(X) < 1/2\}} \} \{2 \eta(X) -1 \} \bigr].
\end{align*}

We now introduce the privacy mechanism and the classifier that we will study in this article. For a bandwidth parameter $h \in (0,\infty)$ and $j=(j_1,...,j_d) \in \mathbb{Z}^d$ set the grid points $x_j:= ( j_1h, \ldots, j_dh )$, and for each $x \in [0,1]^d$ define $j^*(x):= \mathrm{argmin}_{j \in \mathbb{Z}^d } \|x-x_j\|$. With a slight abuse of notation we will assume that our sample size is even and is given by $2n$. For each $i=1,\ldots,2n$, individual $i$ should form the binary array
\[
	B_i:= \bigl( \mathbbm{1}_{\{\|X_i-x_j\|_\infty < h\}} \bigr)_{j \in \mathbb{Z}^d}
\]
and generate a random array $\epsilon_i = (\epsilon_{ij})_{ j \in \mathbb{Z}^d}$ with independent and identically distributed Laplace$(\alpha/2^{d+1})$ entries. The $Z_i$ are then given by
\[
       Z_i=B_i + \epsilon_i, \quad \text{if }i \in \{1,\ldots,n\}, \text{  and }
       Z_i = Y_i B_i + \epsilon_i, \quad \text{if }i \in \{n+1,\ldots,2n\}.
\]
Note that this privacy mechanism is local, as each $Z_i$ can be calculated by a different person/machine, and non-interactive, i.e. no other privatized data $Z_j$ with $j \not = i$ are used to encode additional knowledge into $Z_i$.

Our first claim is that this privacy mechanism respects the $\alpha$-LDP constraint.
\begin{thm}
\label{Prop:Private}
Let $Q$ be the privacy mechanism that takes $(X_1,Y_1),\ldots,(X_{2n},Y_{2n})$ as input and outputs $Z_1,\ldots,Z_{2n}$, as described above. Then $Q \in \mathcal{Q}_\alpha$.
\end{thm}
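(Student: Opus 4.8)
The plan is to verify the non-interactive $\alpha$-LDP condition directly, namely that for each $i$ the kernel $Q_i$ satisfies $\sup_{A}\sup_{u_i,u_i'} Q_i(A|u_i)/Q_i(A|u_i') \le e^\alpha$. Since the mechanism is a product over $i$, it suffices to bound the likelihood ratio of a single output $Z_i$. For $i \le n$, $Z_i = B_i + \epsilon_i$ is obtained from the deterministic array $B_i \in \{0,1\}^{\mathbb{Z}^d}$ by adding independent Laplace noise to each coordinate, so $Z_i$ has a density (with respect to a suitable product reference measure) given by a product of one-dimensional Laplace densities centred at the coordinates of $B_i$; similarly for $i > n$ with $B_i$ replaced by $Y_iB_i$. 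The key structural observation is that changing the input $u_i = (X_i,Y_i)$ changes only \emph{finitely many} coordinates of the centring array, and each coordinate changes by at most $1$ in absolute value.

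First I would fix $i \le n$ and two inputs $(x,y)$ and $(x',y')$, and write the vector array $b = B(x)$, $b' = B(x')$, where $B(x) = (\mathbbm{1}_{\{\|x-x_j\|_\infty < h\}})_{j}$. For a given point $z = (z_j)_j$, the ratio of densities is $\prod_j \exp\bigl(-\lambda(|z_j - b_j| - |z_j - b_j'|)\bigr)$ where $\lambda = \alpha/2^{d+1}$ is the Laplace rate parameter. By the triangle inequality $|z_j - b_j| - |z_j - b_j'| \le |b_j - b_j'| \le 1$, so the log-ratio is bounded by $\lambda$ times the number of coordinates $j$ on which $b$ and $b'$ differ. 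The crucial counting step is that $b_j \ne b_j'$ forces $x_j$ to lie within $\ell^\infty$-distance $h$ of $x$ but not of $x'$, or vice versa; in either case $x_j$ lies in the $\ell^\infty$-ball of radius $h$ around $x$ (or around $x'$). The number of grid points $x_j = (j_1 h,\ldots,j_d h)$ in an $\ell^\infty$-ball of radius $h$ is at most $2^d$ (at most two admissible values of $j_\ell$ per coordinate, since an interval of length $2h$ contains at most two multiples of $h$ in its open version — and being a touch careful one gets the bound $2^d$). Hence the number of differing coordinates is at most $2 \cdot 2^d = 2^{d+1}$, so the log-ratio is at most $\lambda \cdot 2^{d+1} = \alpha$, as required. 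The case $i \in \{n+1,\ldots,2n\}$ is identical after noting that $Y_iB_i$ still has entries in $\{0,1\}$ and differs from $Y_i'B_i'$ in at most the same set of coordinates (plus, possibly, those where only $Y$ flips, which are already contained in the support of $B_i$ or $B_i'$), so the same $2^{d+1}$ bound holds.

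The main obstacle — really the only non-routine point — is getting the combinatorial constant right: one must argue carefully that at most $2^{d+1}$ coordinates of the centring array can change, accounting for both the grid-point count in a single $\ell^\infty$-ball and the fact that coordinates can differ due to membership in the ball around $x$ \emph{or} the ball around $x'$. Once this count is pinned down, the choice of Laplace scale $\alpha/2^{d+1}$ makes the bound come out to exactly $e^\alpha$, and since $Q_i$ does not depend on $z_1,\ldots,z_{i-1}$, the general (interactive) supremum in the definition of $\mathcal{Q}_\alpha$ reduces to this pointwise bound. I would close by remarking that the product form of $Q$ then gives $Q \in \mathcal{Q}_\alpha$.
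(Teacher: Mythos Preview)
Your proposal is correct and follows essentially the same route as the paper: bound the Laplace density ratio coordinatewise via the triangle inequality, then count the number of coordinates on which the centring arrays can differ. The paper phrases the combinatorial step slightly more directly---it simply observes that $B_i$ (respectively $Y_iB_i$) is a binary array with at most $2^d$ nonzero entries, so $\sum_j |b_j-b_j'|\le 2\cdot 2^d$---which is exactly your union-of-two-balls count without tracking the symmetric difference explicitly.
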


Now let $X=x_0$ be the test point we wish to classify. Calculate
\[
	T_n(x_0):= \frac{1}{n} \sum_{i=n+1}^{2n} Z_{i j^*(x_0)} - \frac{1}{2n} \sum_{i=1}^n Z_{i j^*(x_0)},
\]
and define the classifier $C_n(x_0):= \mathbbm{1}_{\{T_n(x_0) \geq 0\}}$. Our first main result is that this classifier is universally consistent, in the following sense.
\begin{thm}
\label{Thm:UniversalConsistency}
Suppose that $h \rightarrow 0$ and $n \alpha^2 h^{2d} \rightarrow \infty$ as $n \rightarrow \infty$. Then, for any probability distribution $P$ on $\mathbb{R}^d \times \{0,1\}$, we have that $\mathcal{E}_P(C_n) \rightarrow 0$ as $n \rightarrow \infty$.
\end{thm}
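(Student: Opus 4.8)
The plan is to write the excess risk as an integral against the marginal law $\mu$ of $X$ on $\mathbb{R}^d$, to show that the integrand tends to $0$ at $\mu$-almost every point, and to finish by dominated convergence. Since $C_n$ is a function of the private sample $Z_1,\dots,Z_{2n}$ alone it is independent of the fresh test point $X$, so $\mathbb{P}(C_n(X)=0\mid X=x)=\mathbb{P}(T_n(x)<0)$, the probability being over the private sample. Hence
\[
\mathcal{E}_P(C_n)=\int_{\mathbb{R}^d}\bigl\{\mathbb{P}(T_n(x)<0)-\mathbbm{1}_{\{\eta(x)<1/2\}}\bigr\}\{2\eta(x)-1\}\,d\mu(x),
\]
and the integrand lies in $[0,|2\eta(x)-1|]\subseteq[0,1]$, with $|2\eta(x)-1|$ being $\mu$-integrable. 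It therefore suffices to show that for $\mu$-a.e.\ $x_0$ with $\eta(x_0)\neq 1/2$ one has $\mathbb{P}(T_n(x_0)<0)\to\mathbbm{1}_{\{\eta(x_0)<1/2\}}$, i.e.\ that with probability tending to $1$ the sign of $T_n(x_0)$ agrees with that of $2\eta(x_0)-1$.

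Fix such an $x_0$ and write $j_0=j^*(x_0)$. First I would record the first two moments of $T_n(x_0)$. Since the Laplace noise is centred and the two halves of the sample are independent, one computes
\[
\mathbb{E}[T_n(x_0)]=\tfrac12\int_{\{\,\|y-x_{j_0}\|_\infty<h\,\}}\{2\eta(y)-1\}\,d\mu(y),
\]
and, using that $B_{ij_0}$ and $Y_iB_{ij_0}$ are Bernoulli with mean at most $p_h(x_0):=\mu(\{\,\|y-x_{j_0}\|_\infty<h\,\})$ and that the Laplace entries are independent with variance $\sigma^2:=\mathrm{Var}(\mathrm{Laplace}(\alpha/2^{d+1}))\asymp_d\alpha^{-2}$,
\[
\mathrm{Var}(T_n(x_0))\le\frac{5}{4n}\bigl(p_h(x_0)+\sigma^2\bigr).
\]

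The heart of the argument concerns the averaging cube $Q_h:=\{\,y:\|y-x_{j_0}\|_\infty<h\,\}$. It has side $2h$, it contains $x_0$ (as $x_{j_0}$ is within sup-norm distance $h/2$ of $x_0$), and it is sandwiched between the concentric sup-norm balls about $x_0$ of radii $h/2$ and $3h/2$; thus $\{Q_h\}_{h\downarrow 0}$ shrinks to $x_0$ with bounded eccentricity. I would then invoke two standard facts about the arbitrary Radon measure $\mu$: (i) a Lebesgue differentiation theorem along this family, giving $p_h(x_0)^{-1}\int_{Q_h}\{2\eta(y)-1\}\,d\mu(y)\to 2\eta(x_0)-1$ for $\mu$-a.e.\ $x_0$; and (ii) $\mu$ has positive lower $d$-density $\mu$-a.e., i.e.\ $\liminf_{h\to0}\mu(\{\,\|y-x_0\|_\infty<h\,\})/h^{d}>0$, which together with $Q_h\supseteq\{\,\|y-x_0\|_\infty<h/2\,\}$ gives $p_h(x_0)\ge c(x_0)h^{d}$ for all small $h$. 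Combining (i) and (ii): for $\mu$-a.e.\ $x_0$ with $\eta(x_0)\neq 1/2$ there is $h_0(x_0)>0$ such that, for $h<h_0(x_0)$, $\mathbb{E}[T_n(x_0)]$ has the sign of $2\eta(x_0)-1$ and $|\mathbb{E}[T_n(x_0)]|\ge c'(x_0)\,|2\eta(x_0)-1|\,h^{d}$.

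Finally, once $\mathbb{E}[T_n(x_0)]$ has the correct sign the event $\{C_n(x_0)\neq C^{\mathrm{B}}(x_0)\}$ forces $|T_n(x_0)-\mathbb{E}[T_n(x_0)]|\ge|\mathbb{E}[T_n(x_0)]|$, so Chebyshev's inequality gives
\[
\mathbb{P}\bigl(C_n(x_0)\neq C^{\mathrm{B}}(x_0)\bigr)\le\frac{\mathrm{Var}(T_n(x_0))}{\mathbb{E}[T_n(x_0)]^{2}}\le\frac{C(x_0)}{|2\eta(x_0)-1|^{2}}\Bigl(\frac{p_h(x_0)}{nh^{2d}}+\frac{1}{n\alpha^{2}h^{2d}}\Bigr)\longrightarrow 0,
\]
both terms vanishing since $n\alpha^{2}h^{2d}\to\infty$ and, because $h\to0$ with $\alpha$ fixed, also $nh^{2d}\to\infty$ while $p_h(x_0)\le 1$. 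Dominated convergence then yields $\mathcal{E}_P(C_n)\to 0$. In this scheme the moment computations and Chebyshev's inequality are routine; the main obstacle is the measure-theoretic input of the previous paragraph, which must be valid for an \emph{arbitrary} law $\mu$, the only delicate point being that the cubes $Q_h$ are centred at the grid point $x_{j_0}$ rather than at $x_0$ --- a difficulty circumvented by their bounded eccentricity about $x_0$, which makes the Besicovitch covering theorem, and hence the differentiation theorem, applicable even to non-doubling Radon measures.
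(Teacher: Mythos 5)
Your proof follows the same overall outline as the paper's: reduce to showing that for $\mu$-a.e.\ $x_0$ with $\eta(x_0)\neq 1/2$ the probability of misclassification at $x_0$ tends to zero, then finish by dominated convergence (the integrand is non-negative and bounded by $1$, so the dominating function is trivial). The ingredients are also the same in spirit: a lower bound on $|\mathbb{E}[T_n(x_0)]|$ of order $h^d$ coming from a positive lower $d$-density plus a Lebesgue differentiation theorem, and a concentration inequality for $T_n(x_0)$ about its mean. Where you differ is in the technical execution, and both differences are legitimate. First, you use Chebyshev with a second-moment bound in place of the paper's moment-generating-function/Chernoff estimate \eqref{Eq:Subgaussian}; for the purposes of this theorem Chebyshev is simpler and entirely adequate, though the paper's subexponential tail is recycled in the proof of Theorem~\ref{Prop:UpperBound}, where explicit rates are needed, so the paper's choice is forward-looking rather than necessary here. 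Second, you cite as ``standard'' the fact that an arbitrary Borel probability measure on $\mathbb{R}^d$ has positive lower $d$-density at $\mu$-a.e.\ point and that the differentiation theorem holds along the slightly off-centre cubes $Q_h$ (via Besicovitch covering); the paper instead gives a self-contained proof of the density fact via the Lebesgue decomposition and the Vitali covering lemma, and appeals to a cited differentiation lemma for the average of $\eta-\tfrac12$. Both routes are correct; you correctly identify the bounded-eccentricity observation as the key point that makes the Besicovitch-based differentiation theorem applicable despite the cubes being centred at $x_{j^*(x_0)}$ rather than at $x_0$. One small caveat worth making explicit in a polished write-up: when you conclude that $p_h(x_0)/(nh^{2d})\to 0$ you invoke $nh^{2d}\to\infty$, which follows from $n\alpha^2h^{2d}\to\infty$ only under a boundedness assumption on $\alpha$; this is implicit in the paper as well (its bound \eqref{Eq:Subgaussian} is only valid for $\alpha$ bounded, and all the rate results take $\alpha\in(0,1]$), so it is not a genuine gap, but it should be stated.
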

In comparison with classical universal consistency results, see for example Theorem 6.2 of \cite{PTPR:96}, we see that we take $n \alpha^2 h^{2d} \rightarrow \infty$ rather than $n h^d \rightarrow \infty$. The change in the exponent of $h$ seems to be an unavoidable consequence of the privacy constraints.

Our remaining results study the minimax rate of convergence of the excess risk of our classifier over certain classes of distributions $P$, and also show that this matches the minimax optimal rate of convergence under an $\alpha$-LDP constraint. For a class $\mathcal{P}$ of distributions of $(X,Y)$, for a privacy parameter $\alpha >0$ and for a sample size $n \in \mathbb{N}$, we will write
\[
	\mathcal{R}_{n,\alpha}(\mathcal{P}):= \inf_{\substack{Q \in \mathcal{Q}_\alpha \\C_n}} \sup_{P \in \mathcal{P}} \mathcal{E}_P(C_n)
\]
for the minimax excess risk, where the infimum is taken over all Markov kernels $Q \in \mathcal{Q}_\alpha$ and all classifiers $C_n$ depending only on the privatised data $Z_1,\ldots,Z_n$.

We now introduce the classes of distributions over which our results will hold, and note that they are very similar to those considered in~\cite{AudibertTsybakov07}. Given $\beta \in (0,1]$ and $L>0$ we will say that the distribution of $(X,Y)$ satisfies the $(\beta,L)$-H\"older smoothness condition if
\[
	|\eta(x') - \eta(x)| \leq L \|x-x'\|^\beta
\]
for all $x,x' \in [0,1]^d$. Given $\alpha \geq 0$ and $C_0 >0$ we will say that the distribution of $(X,Y)$ satisifies the $(\gamma,C_0)$-margin condition if
\[
	\mathbb{P}( 0 < |\eta(X) - 1/2| \leq t) \leq C_0 t^\gamma
\]
for all $t>0$. For $c_0,r_0 > 0 $ we say that a Lebesgue measurable set $A \subset [0,1]^d$ is $(c_0,r_0)$-regular if
\[
	\lambda\{ A \cap B_x(r) \} \geq c_0 \lambda\{B_x(r)\}
\]
for all $r \in (0,r_0]$ and $x \in A$. Then for $c_0,r_0,\mu > 0$ we will finally say that the distribution of $(X,Y)$ satisfies the $(c_0,r_0,\mu)$-strong density assumption if $X$ has a density $f$ such that $\mathrm{supp}(f):=\{x:f(x)>0\} \subseteq [0,1]^d$ is $(c_0,r_0)$-regular and $f(x) \geq \mu $ for all $x \in \mathrm{supp}(f)$. We then write $\theta=(\beta,\gamma, C_0,L,c_0,r_0,\mu)$ and $\mathcal{P}(\theta)$ for the class of all distribution of $(X,Y)$ that satisfy the $(\beta,L)$-H\"older smoothness condition, the $(\gamma,C_0)$-margin condition, the $(c_0,r_0,\mu)$-strong density assumption. Our main theorem concerning minimax rates of convergence is the following.

\begin{thm}
\label{Thm:Main}
Fix $\theta=(\beta,\gamma,C_0,L,c_0,r_0,\mu)$ with $\beta \in (0,1], \gamma \in [0,\infty)$ and $C_0,L,c_0,$ $r_0,\mu \in (0,\infty)$ such that $\beta \gamma \leq d$. Then there exist $c=c(d,\theta)$ and $C=C(d,\theta)$ such that, for all $n \in \mathbb{N}$ and $\alpha \in (0,1]$, we have
\[
	c (n \alpha^2)^{-\frac{\beta(1+\gamma)}{2\beta+2d}} \leq \mathcal{R}_{n,\alpha}(\mathcal{P}(\theta)) \leq C (n \alpha^2)^{-\frac{\beta(1+\gamma)}{2\beta+2d}}
\]
\end{thm}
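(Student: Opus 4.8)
The plan is to prove the two inequalities separately: the upper bound by analysing the classifier $C_n$ of Theorem~\ref{Thm:UniversalConsistency} with a bandwidth $h$ tuned to $n$ and $\alpha$, and the lower bound by combining an Assouad-type construction in the spirit of~\cite{AudibertTsybakov07} with the information-contraction inequality for $\alpha$-LDP channels from~\cite{DJW13}. Throughout I would suppress multiplicative constants depending only on $d$ and $\theta$. Both inequalities are trivial when $n\alpha^2$ is smaller than a constant depending on $(d,\theta)$: the target rate is then bounded away from $0$, while $\mathcal{E}_P(C_n)\le 1$ always handles the upper bound and a single fixed hard instance handles the lower bound; so I may assume $n\alpha^2$ large.

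For the upper bound, fix a test point $x_0\in\mathrm{supp}(f)$, write $x_\star:=x_{j^\star(x_0)}$ (so $\|x_0-x_\star\|_\infty\le h/2$) and $p:=\mathbb{P}(\|X-x_\star\|_\infty<h)$. From the definitions, $Z_{ij^\star(x_0)}=\mathbbm{1}_{\{\|X_i-x_\star\|_\infty<h\}}+\epsilon_{ij^\star(x_0)}$ for $i\le n$ and $Z_{ij^\star(x_0)}=Y_i\mathbbm{1}_{\{\|X_i-x_\star\|_\infty<h\}}+\epsilon_{ij^\star(x_0)}$ for $i>n$, so $\mathbb{E}[T_n(x_0)]=\mathbb{E}[(\eta(X)-1/2)\mathbbm{1}_{\{\|X-x_\star\|_\infty<h\}}]$. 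The first steps are: (i) the strong density assumption together with the $(c_0,r_0)$-regularity of $\mathrm{supp}(f)$ give $p\ge c\,h^d$ for $h$ small, since the cube of side $2h$ about $x_\star$ contains a Euclidean ball of radius of order $h$ about $x_0$; (ii) the $(\beta,L)$-H\"older condition gives $|\mathbb{E}[T_n(x_0)]-p(\eta(x_0)-1/2)|\le C\,h^\beta p$, so that when $|\eta(x_0)-1/2|$ exceeds a fixed multiple of $h^\beta$ the mean of $T_n(x_0)$ has the same sign as $2\eta(x_0)-1$ and $|\mathbb{E}[T_n(x_0)]|$ is at least of order $h^d\,|\eta(x_0)-1/2|$; (iii) since each $Z_{ij^\star(x_0)}$ is the sum of a $[0,1]$-valued variable and an independent Laplace variable of scale of order $1/\alpha$, combining Bernstein's inequality with a Chernoff bound for the sub-exponential Laplace average --- valid on the deviation range needed, since $|\mathbb{E}[T_n(x_0)]|\le p\le 1$ is at most of order $1/\alpha$ --- yields $\mathbb{P}(C_n(x_0)\ne C^{\mathrm B}(x_0))\le C\exp(-c\,n\alpha^2 h^{2d}(\eta(x_0)-1/2)^2)$ whenever $|\eta(x_0)-1/2|$ exceeds a fixed multiple of $h^\beta$. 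Then, writing $\mathcal{E}_P(C_n)=\mathbb{E}_X\bigl[|2\eta(X)-1|\,\mathbb{P}(C_n(X)\ne C^{\mathrm B}(X)\mid X)\bigr]$, I would split according to whether $|\eta(X)-1/2|$ is of order below or above $h^\beta$, bound the low part with the $(\gamma,C_0)$-margin condition (contribution of order $h^{\beta(1+\gamma)}$) and the high part by a dyadic decomposition of the super-level sets of $|\eta-1/2|$ played against the exponential bound and the margin condition (contribution of order $(n\alpha^2 h^{2d})^{-(1+\gamma)/2}$), to reach $\sup_{P\in\mathcal{P}(\theta)}\mathcal{E}_P(C_n)\le C\bigl\{h^{\beta(1+\gamma)}+(n\alpha^2 h^{2d})^{-(1+\gamma)/2}\bigr\}$. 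Taking $h\asymp(n\alpha^2)^{-1/(2\beta+2d)}$ balances the two terms and gives the upper bound; reassuringly this $h$ satisfies $h\to0$ and $n\alpha^2 h^{2d}\to\infty$, in agreement with Theorem~\ref{Thm:UniversalConsistency}.

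For the lower bound, I would construct a sign-indexed family $\{P_\sigma:\sigma\in\{-1,1\}^M\}\subseteq\mathcal{P}(\theta)$ as in~\cite{AudibertTsybakov07}: partition a fixed sub-cube of $[0,1]^d$ into cubes of side of order $1/m$, declare $M\asymp m^{d-\beta\gamma}$ of them active --- this is where the hypothesis $\beta\gamma\le d$ is used, so that $M\ge1$ --- take $P_X$, the same for every $\sigma$, with density bounded below by $\mu$ and supported on a union of Euclidean balls, one inside each active cube, together with a region on which $\eta$ is a constant bounded away from $1/2$, and on the ball inside active cube $k$ set $\eta_\sigma=1/2+\sigma_k\,r\,\phi(m(\,\cdot\,-x_k))$ for a fixed smooth bump $\phi\ge0$ and $r\asymp m^{-\beta}$; for a suitable choice of the free parameters this family lies in $\mathcal{P}(\theta)$. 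An error on active cube $k$ costs an amount of order $r\,m^{-d}$, so Assouad's lemma gives, for every $Q\in\mathcal{Q}_\alpha$ and every classifier $C_n$,
\[
	\sup_\sigma\mathcal{E}_{P_\sigma}(C_n)\ge c\,r\,m^{-d}\sum_{k=1}^M\Bigl(1-\bigl[\tfrac12 D_{\mathrm{KL}}(M_{+k}\,\|\,M_{-k})\bigr]^{1/2}\Bigr)_+,
\]
where $M_{\pm k}$ is the law of $(Z_1,\ldots,Z_n)$ under the mixture of the $P_\sigma$ with $\sigma_k=\pm1$. Because $P_{+k}$ and $P_{-k}$ differ only through the conditional law of $Y$ on active cube $k$, one has $\|P_{+k}-P_{-k}\|_{\mathrm{TV}}\le C\,r\,m^{-d}$, and the contraction inequality of~\cite{DJW13}, in its general sequentially interactive form, gives
\[
	D_{\mathrm{KL}}(M_{+k}\,\|\,M_{-k})+D_{\mathrm{KL}}(M_{-k}\,\|\,M_{+k})\le 4(e^\alpha-1)^2\,n\,\|P_{+k}-P_{-k}\|_{\mathrm{TV}}^2\le C'\,\alpha^2 n\,r^2 m^{-2d},
\]
using $\alpha\le1$. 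Choosing $m\asymp(n\alpha^2)^{1/(2\beta+2d)}$ makes the right-hand side of order $1$, hence each summand above is bounded below by a positive constant and $\sup_\sigma\mathcal{E}_{P_\sigma}(C_n)\ge c\,M\,r\,m^{-d}\asymp m^{-\beta(1+\gamma)}\asymp(n\alpha^2)^{-\beta(1+\gamma)/(2\beta+2d)}$, the desired lower bound.

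The step I expect to be the main obstacle is the lower bound, and within it the correct handling of the privacy contraction. The key realisation is that the relevant divergence between neighbouring hypotheses is the squared total-variation distance $\|P_{+k}-P_{-k}\|_{\mathrm{TV}}^2$, of order $r^2 m^{-2d}$, rather than the Hellinger or $\chi^2$ divergence, of order $r^2 m^{-d}$, that governs the non-private analysis of~\cite{AudibertTsybakov07}; the extra factor $m^{-d}$ produced by squaring the total-variation distance is exactly what turns the classical exponent $2\beta+d$ into $2\beta+2d$. Making this rigorous requires checking that the Assouad construction of~\cite{AudibertTsybakov07} survives the contraction step --- in particular that mixing over the remaining coordinates in $M_{\pm k}$ does not spoil the bound, which is why the general interactive version of the DJW inequality is needed --- and keeping track of the free constants so that the H\"older, margin and strong-density conditions all hold simultaneously. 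The upper bound, by contrast, should be a routine if careful assembly of a bias--variance analysis, sub-exponential concentration for the Laplace noise, and the margin-condition integration.
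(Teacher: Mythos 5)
Your proposal follows essentially the same route as the paper: the upper bound by a bias--variance analysis of the regressogram classifier $C_n$ (with the sub-Gaussian tail for the Laplace-corrupted average on the relevant deviation range, the strong density assumption and H\"older bias bound, a dyadic decomposition played against the margin condition, and $h\asymp(n\alpha^2)^{-1/(2\beta+2d)}$), and the lower bound by the Assouad construction of Audibert--Tsybakov combined with the $\alpha$-LDP information-contraction inequality of Duchi--Jordan--Wainwright, with $m\asymp(n\alpha^2)^{1/(2\beta+2d)}$ active cubes carrying mass of order $m^{-d}$. The only cosmetic difference is that the paper invokes Theorem~3 of \cite{DJW18} (the private Assouad's lemma in packaged form, applied after Cauchy--Schwarz and Pinsker) rather than the raw KL-contraction of \cite{DJW13}, but this amounts to the same chain of inequalities; your explicit remark that the squared total-variation, of order $r^2m^{-2d}$, replaces the order-$r^2m^{-d}$ divergence of the non-private analysis and thereby turns $2\beta+d$ into $2\beta+2d$ is exactly the phenomenon the paper's choice of $q,w,m$ is exploiting, though the paper does not spell it out. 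One point worth tightening in your write-up: for the construction to satisfy the strong density assumption you need the per-ball density $\asymp m^d w$ to exceed $\mu$ while keeping $w\asymp m^{-d}$, which forces $\mu$ to be small; the theorem accommodates this by quantifying over $c_0,r_0,\mu\in(0,c)$ in Theorem~\ref{Prop:LowerBound}.
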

Note that the rates are slower in the privatized setup than the rates obtained by \cite{AudibertTsybakov07}, $n^{-\frac{\beta(1+\gamma)}{2\beta+d}}$, in the classical setup. Such a loss was already noticed for minimax estimation of a probability density function with privatized data, see e.g. \cite{DJW18}. Theorem~\ref{Thm:Main} is a consequence of Theorems~\ref{Prop:UpperBound} and~\ref{Prop:LowerBound} below.

The following result bounds the excess risk of this classifier and thus establishes the upper bound on $\mathcal{R}_{n,\alpha}(\mathcal{P}(\theta))$ in Theorem~\ref{Thm:Main}. Note that the restriction to the case $\beta \in (0,1]$ is mainly due to the regressogram that defines our classifier. A smoother version should allow to cover cases for $\beta >1$.
\begin{thm}
\label{Prop:UpperBound}
Fix $\theta=(\beta, \gamma,C_0,L,c_0,r_0,\mu)$ with $\beta \in (0,1], \gamma \in [0,\infty)$ and $C_0,L,c_0,$ $r_0,\mu \in (0,\infty)$. Then there exists $A=A(d,\beta,\gamma,c_0,r_0,\mu)>0$ such that
\[
	\sup_{P \in \mathcal{P}(\theta)} \mathcal{E}_P(C_n) \leq A C_0  \biggl\{ \frac{L^d}{(n \alpha^2)^{\beta/2}} \biggr\}^{\frac{1+\gamma}{d + \beta}}.
\]
\end{thm}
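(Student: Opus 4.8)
The plan is to reduce the excess risk to a pointwise error probability, control that probability by combining a bias estimate with Bernstein's inequality, and then integrate against the margin condition via a dyadic peeling argument. First, checking the two cases according to the sign of $\eta(x) - 1/2$ in the definition of $\mathcal{E}_P$ gives
\[
	\mathcal{E}_P(C_n) = \int_{\mathrm{supp}(f)} |2\eta(x) - 1| \, \mathbb{P}\bigl(C_n(x) \neq C^{\mathrm{B}}(x)\bigr) \, f(x) \, dx ,
\]
where the inner probability is over the privatised sample. Fix $x \in \mathrm{supp}(f)$ and write $j_0 = j^*(x)$ and $Q_{j_0} = \{u : \|u - x_{j_0}\|_\infty < h\}$; since $\|x - x_{j_0}\|_\infty \le h/2$ we have $x \in Q_{j_0}$ and $B_x(h/2) \subseteq Q_{j_0}$. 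With $W = \mathbbm{1}\{\|X - x_{j_0}\|_\infty < h\}$, $p = \mathbb{P}(X \in Q_{j_0})$ and $m = \mathbb{E}[T_n(x)] = \int_{Q_{j_0}}(\eta(u) - 1/2) f(u) \, du$, the strong density assumption gives $p \ge c_1 h^d$ for a constant $c_1 = c_1(d, c_0, \mu)$ once $h \le 2 r_0$, while writing $m = (\eta(x) - 1/2) p + \int_{Q_{j_0}}(\eta(u) - \eta(x)) f(u) \, du$ and using $(\beta, L)$-H\"older smoothness together with $\mathrm{diam}(Q_{j_0}) \le 2\sqrt{d}\, h$ gives $|m - (\eta(x) - 1/2) p| \le L (2\sqrt{d}\, h)^\beta p$. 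Consequently, setting $\delta = |\eta(x) - 1/2|$, as soon as $\delta \ge 2 L (2\sqrt{d}\, h)^\beta$ the number $m$ has the sign of $\eta(x) - 1/2$ and $|m| \ge \delta p / 2$, so the event $\{C_n(x) \neq C^{\mathrm{B}}(x)\}$ forces $|T_n(x) - m| \ge |m| \ge \delta p / 2$.

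Next I would bound $T_n(x) - m = \frac1n \sum_{i=1}^{2n} \xi_i$, where the $\xi_i$ are independent and centred, each being the sum of a centred variable bounded by $1$ with variance at most $p$ and an independent centred Laplace variable of variance $\sigma^2 := 2 (2^{d+1}/\alpha)^2$, which is of order $4^d \alpha^{-2}$. Then $\sum_i \mathrm{Var}(\xi_i)$ is at most a constant times $n(p + \sigma^2)$, hence a constant times $n \sigma^2$ (since $p \le 1 \le \sigma^2$ for $\alpha \le 1$), and each $\xi_i$ satisfies a Bernstein moment condition with scale a constant multiple of $\sigma$, so Bernstein's inequality yields $\mathbb{P}(|T_n(x) - m| \ge t) \le 2\exp\bigl(-c_2 n \min(t^2/\sigma^2, t/\sigma)\bigr)$. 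Combining with the previous step, for every $x \in \mathrm{supp}(f)$ with $\delta \ge 2 L (2\sqrt{d}\, h)^\beta$ we get
\[
	\mathbb{P}\bigl(C_n(x) \neq C^{\mathrm{B}}(x)\bigr) \le 2\exp\Bigl(-c_3 \, n \min\bigl((\delta h^d/\sigma)^2, \, \delta h^d/\sigma\bigr)\Bigr) ,
\]
with $c_3 = c_3(d, c_0, \mu)$ (up to a universal factor from Bernstein's inequality).

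The third step integrates this over $\mathrm{supp}(f)$. Put $t_0 := 2 L (2\sqrt{d}\, h)^\beta \vee \bigl(\sigma / (\sqrt{n}\, h^d)\bigr)$. The contribution of $\{0 < \delta \le t_0\}$ is at most $2 t_0 \, \mathbb{P}(0 < |\eta(X) - 1/2| \le t_0) \le 2 C_0 t_0^{1+\gamma}$ by the $(\gamma, C_0)$-margin condition. On $\{\delta > t_0\}$, where the displayed bound applies, I would peel into the shells $\{2^k t_0 < \delta \le 2^{k+1} t_0\}$, $k \ge 0$: there $|2\eta(x) - 1| \le 2^{k+2} t_0$, the error probability is at most $2 \exp(-c_3 \, 2^k \, n t_0^2 h^{2d}/\sigma^2)$, and the shell has probability at most $C_0 (2^{k+1} t_0)^\gamma$, so, since $n t_0^2 h^{2d}/\sigma^2 \ge 1$ by the choice of $t_0$, summing the convergent series $\sum_{k \ge 0} 2^{k(1+\gamma)} e^{-c_3 2^k}$ bounds this contribution by $C' C_0 t_0^{1+\gamma}$ with $C' = C'(d, \beta, \gamma, c_0, \mu)$. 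Thus $\mathcal{E}_P(C_n) \le C'' C_0 t_0^{1+\gamma}$. I would conclude by choosing $h$ so that the two terms defining $t_0$ coincide, i.e.\ $2 L (2\sqrt{d}\, h)^\beta = \sigma / (\sqrt{n}\, h^d)$, which makes $t_0$ of order $L^{d/(d+\beta)} (\sigma^2/n)^{\beta/(2(d+\beta))}$, that is of order $\bigl(L^d (n\alpha^2)^{-\beta/2}\bigr)^{1/(d+\beta)}$ after absorbing the $d$-dependent factors, and this yields the asserted bound. Should this $h$ exceed $2 r_0$, then $n\alpha^2$ is bounded above by a constant depending on $d, \beta, L, r_0$, so the asserted right-hand side is bounded below by a constant; one then takes $h = 2 r_0$ and uses the trivial bound $\mathcal{E}_P(C_n) \le 1$ — this is the source of the dependence of $A$ on $r_0$.

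The step I expect to be the main obstacle is the third one: converting the pointwise bound on $\mathbb{P}(C_n(x) \neq C^{\mathrm{B}}(x))$ into an excess-risk bound with the sharp exponent $1+\gamma$ rather than merely $\gamma$. The key point is that the factor $|2\eta(x) - 1| = 2\delta$ must be retained inside the dyadic peeling rather than crudely replaced by $2 t_0$, since it is exactly this factor — together with the margin condition, which supplies the power $\gamma$ — that produces the extra factor $t_0$; one must also keep careful track of how $L$ enters through the bias and how $\alpha$ enters through the Laplace variance $\sigma^2$ in order to recover the precise dependence on $L$ and on $n\alpha^2$ in the statement.
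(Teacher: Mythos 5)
Your proposal is correct and follows essentially the same route as the paper: a bias bound from the strong density and H\"older assumptions, a sub-Gaussian concentration bound on $T_n(x)-\mathbb{E}T_n(x)$, dyadic peeling against the margin condition retaining the factor $|2\eta(x)-1|$, and an optimal choice of $h$ balancing bias against noise. The only cosmetic difference is the concentration step: you invoke Bernstein's inequality for sums of bounded-plus-Laplace variables, whereas the paper reuses an explicit moment-generating-function computation (its display~(3.1)) to obtain the same $\exp(-c\,n\alpha^2 t^2)$ bound.
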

The following lower bound complements our previous upper bound and completes the proof of Theorem~\ref{Thm:Main}.
\begin{thm}
\label{Prop:LowerBound}
Fix $\beta \in (0,1], \gamma \in [0,\infty)$ and $C_0,L \in (0,\infty)$ such that $\beta \gamma \leq d$. Then there exists $c=c(d,\beta,\gamma,C_0,L)$ such that, for all $c_0,r_0,\mu \in (0,c)$, $n \in \mathbb{N}$ and $\alpha \in (0,1]$ we have
\[
	\mathcal{R}_{n, \alpha}(\mathcal{P}(\beta,\gamma,C_0,L,\mu)) \geq c (n \alpha^2)^{-\frac{\beta(1+\gamma)}{2\beta+2d}}.
\]
\end{thm}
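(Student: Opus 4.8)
The plan is to prove the lower bound by a version of Assouad's lemma adapted to the local privacy constraint, the only genuinely new ingredient being the information contraction available for $\alpha$-LDP channels. Throughout, $c$ and $C$ denote positive constants depending only on $(d,\beta,\gamma,C_0,L)$, possibly changing from line to line, and $\delta=\delta_n\in(0,1]$ is a scale to be fixed at the end. Following the construction of Audibert and Tsybakov \cite{AudibertTsybakov07}, I would build a family $\{P_v:v\in\{0,1\}^m\}$ of distributions, all lying in $\mathcal{P}(\beta,\gamma,C_0,L,c_0,r_0,\mu)$ for some fixed (hence for all sufficiently small) values of $c_0,r_0,\mu$, indexed by $m=m_n$ bits with $m$ of order $\delta^{\beta\gamma-d}$; note that $m\geq 1$ precisely because $\beta\gamma\leq d$ and $\delta\leq 1$. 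Concretely, partition a fixed sub-cube of $[0,1]^d$ into $m$ cells of side of order $\delta$, let $X$ have a fixed density $f$ bounded above and bounded below by a positive constant on a regular support (so the strong density assumption holds), set $\eta_v(x)=1/2$ outside the cells, and on cell $j$ set $\eta_v(x)=1/2+(2v_j-1)\,c_L\delta^\beta\varphi((x-x_j)/\delta)$, where $c_L$ is a small multiple of $L$ and $\varphi$ is the fixed bump profile of \cite{AudibertTsybakov07} (equal to $1$ on the interior of the cell, vanishing near its boundary). With $c_L$ small enough the $(\beta,L)$-H\"older condition holds; with $m$ of the stated order and $\varphi$ chosen as in \cite{AudibertTsybakov07}, the $(\gamma,C_0)$-margin condition holds. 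We need two properties of this family: first, for any classifier $C$,
\[
	\mathcal{E}_{P_v}(C)\geq c\,\delta^\beta\sum_{j=1}^m\rho_j(C,v),
\]
where $\rho_j(C,v)$ is the Lebesgue measure of the subset of the plateau of cell $j$ on which $C$ disagrees with the Bayes classifier of $P_v$; second, if $v$ and $v'$ differ only in coordinate $j$ then $\|P_v-P_{v'}\|_{\mathrm{TV}}\leq C\,\delta^{\beta+d}=:\tau$, since the laws differ only on cell $j$, which has Lebesgue measure of order $\delta^d$ and on which $\eta$ changes by at most $2c_L\delta^\beta$ and $f$ is bounded.

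For $Q\in\mathcal{Q}_\alpha$, let $M_v$ denote the law of $(Z_1,\ldots,Z_n)$ when $(X_i,Y_i)$ are i.i.d. $P_v$. The key tool is the information contraction for $\alpha$-LDP channels originating in \cite{DJW13}: for $\alpha\in(0,1]$ and any, possibly interactive, $Q\in\mathcal{Q}_\alpha$,
\[
	\mathrm{KL}(M_v\,\|\,M_{v'})\leq C\,n\alpha^2\,\|P_v-P_{v'}\|_{\mathrm{TV}}^2.
\]
This is exactly where the exponent changes relative to the non-private problem: the effective information per observation is of order $\alpha^2\tau^2$, which carries an extra factor $\delta^d$ compared with the squared Hellinger distance of order $\delta^{2\beta+d}$ that governs the classical lower bound. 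Combined with Pinsker's inequality, if $v,v'$ differ in one coordinate then $\|M_v-M_{v'}\|_{\mathrm{TV}}\leq C\sqrt{n\alpha^2}\,\tau$. Now fix a classifier $C_n$ based on the private data, a coordinate $j$, and a point $x$ in the plateau of cell $j$; writing $v^{(j)}$ for $v$ with coordinate $j$ flipped, the Bayes labels of $P_v$ and $P_{v^{(j)}}$ at $x$ differ, so
\[
	\mathbb{P}_{M_v}\bigl(C_n(x)\neq b_v(x)\bigr)+\mathbb{P}_{M_{v^{(j)}}}\bigl(C_n(x)\neq b_{v^{(j)}}(x)\bigr)\geq 1-\|M_v-M_{v^{(j)}}\|_{\mathrm{TV}}\geq 1-C\sqrt{n\alpha^2}\,\tau,
\]
where $b_v$ denotes the Bayes classifier of $P_v$. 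Averaging the first display over $v$ uniform on $\{0,1\}^m$, pairing each $v$ with $v^{(j)}$ coordinatewise, and using that each cell's plateau has Lebesgue measure of order $\delta^d$, we obtain
\[
	\sup_v\mathcal{E}_{P_v}(C_n)\geq\frac{1}{2^m}\sum_v\mathcal{E}_{P_v}(C_n)\geq c\,\delta^{\beta+d}\,m\,\bigl(1-C\sqrt{n\alpha^2}\,\tau\bigr).
\]

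Since $\tau$ is of order $\delta^{\beta+d}$, choosing $\delta$ of order $(n\alpha^2)^{-1/(2\beta+2d)}$ makes $C\sqrt{n\alpha^2}\,\tau\leq 1/2$, so the bracket is at least $1/2$, while $\delta^{\beta+d}m$ is of order $\delta^{\beta+d}\cdot\delta^{\beta\gamma-d}=\delta^{\beta(1+\gamma)}$, which is of order $(n\alpha^2)^{-\beta(1+\gamma)/(2\beta+2d)}$. Taking the infimum over classifiers $C_n$ and over $Q\in\mathcal{Q}_\alpha$ yields the claim. (If the $\delta$ thus defined would exceed $1$, i.e. $n\alpha^2$ is bounded, one instead fixes $\delta$ of order $1$, and the same computation gives a constant lower bound, which absorbs into $c$.)

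The delicate point, and the heart of the matter, is the private information contraction $\mathrm{KL}(M_v\|M_{v'})\leq Cn\alpha^2\|P_v-P_{v'}\|_{\mathrm{TV}}^2$ for \emph{interactive} mechanisms, since the infimum defining $\mathcal{R}_{n,\alpha}$ ranges over all of $\mathcal{Q}_\alpha$: the non-interactive case is the tensorisation of the single-observation bound of \cite{DJW13}, whereas the interactive case requires a sequential argument controlling, conditionally on $Z_1,\ldots,Z_{i-1}$, the contribution of $Z_i$ to the divergence in terms of $\alpha$ and $\|P_v-P_{v'}\|_{\mathrm{TV}}$. The remaining work --- verifying that the Audibert--Tsybakov family meets the margin condition over the whole range $\beta\gamma\leq d$ with the scalings above, and the reduction of the excess risk to the testing quantities $\rho_j$ --- is routine given \cite{AudibertTsybakov07}.
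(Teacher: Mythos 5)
Your proposal is correct and follows essentially the same route as the paper: the Audibert--Tsybakov hypercube construction, the Duchi--Jordan--Wainwright information contraction for $\alpha$-LDP channels, and the identical scalings $\delta\sim(n\alpha^2)^{-1/(2\beta+2d)}$ and $m\sim\delta^{\beta\gamma-d}$. The only minor variation is that the paper invokes the private Assouad lemma (Theorem~3 of Duchi, Jordan and Wainwright, 2018) applied to the coordinatewise mixtures $M_{\pm k}^{(n)}$, whereas you run a pairwise Le Cam argument using the two-point KL contraction $\mathrm{KL}(M_v\|M_{v'})\leq 4n(e^\alpha-1)^2\|P_v-P_{v'}\|_{\mathrm{TV}}^2$, extended to interactive mechanisms by the chain rule conditionally on $Z_1,\ldots,Z_{i-1}$ as you correctly indicate; both routes rest on the same single-observation contraction and give the same rate.
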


\section{Proofs}

\begin{proof}[Proof of Theorem~\ref{Prop:Private}]
First consider $i \in \{1,\ldots,n\}$. Any possible value of $X_i$ produces a value $B_i$ that is a binary array with at most $2^d$ non-zero entries; let $b$ and $b'$ be any two such possible values. Then, writing $f_{Z|X}^{(1)}(z|x)$ for the conditional density of $Z_i$ given $X_i$, for any $z \in \mathbb{R}^{\mathbb{Z}^d}$ we have
\begin{align*}
	\frac{f_{Z|X}(z|x)}{f_{Z|X}(z|x')} = \prod_{j \in \mathbb{Z}^d} \frac{\frac{\alpha}{2^{d+2}} e^{-\alpha |z_j - b_j|/2^{d+1}}}{\frac{\alpha}{2^{d+2}} e^{-\alpha |z_j - b_j'|/2^{d+1}}} &= \exp \biggl( \frac{\alpha}{2^{d+1}} \sum_{j \in \mathbb{Z}^d} (|z_j - b_j'| - |z_j - b_j|) \biggr) \\
	&\leq \exp\biggl( \frac{\alpha}{2^{d+1}} \sum_{j \in \mathbb{Z}^d} |b_j - b_j'|  \biggr) \leq e^\alpha.
\end{align*}
Since $(X_i,Y_i) \perp \!\!\! \perp Z_i | X_i$, this proves that $\alpha$-LDP is respected for the first half of the individuals. When $i \in \{n+1,\ldots,2n\}$ we have $(X_i, Y_i) \perp \!\!\! \perp Z_i | (X_i,Y_i)$. Since $Y_iB_i$ is always a binary array with at most $2^d$ non-zero entries, the same calculation as above shows that $\alpha$-LDP is also respected for the second half of the individuals. This concludes the proof.
\end{proof}
\begin{proof}[Proof of Theorem~\ref{Thm:UniversalConsistency}]
As shorthand, when $X=x_0$ and for $i \in \{1,\ldots,n\}$, write $W_i = W_i(x_0):=Z_{i+n,j^*(x_0)}$, $U_i=U_i(x_0):= Z_{i j^*(x_0)}$ and
\[
	T_n=T_n(x_0) = \frac{1}{n} \sum_{i=1}^n \Bigl( W_i - \frac{1}{2} U_i \Bigr).
\]
Using the fact that the moment generating function of a Laplace$(\alpha/2^{d+1})$ random variable is given by $\lambda \mapsto 1/(1-2^{2d+2}\lambda^2/\alpha^2)$ on $(- \alpha/2^{d+1}, \alpha/2^{d+1})$, for $\lambda \in (-n\alpha/2^{d+1},n\alpha/2^{d+1})$ we have that
\begin{align*}
	\mathbb{E} \bigl\{ e^{\lambda(T_n - \mathbb{E} T_n)} \bigr\} =\bigl[ \mathbb{E} \bigl\{ e^{(\lambda/n)(W-U/2 - \mathbb{E}W - \mathbb{E}U/2)} \bigr\} \bigr]^n \leq \biggl\{ \frac{e^{(1/8+1/32)\lambda^2/n^2}}{(1-\frac{2^{2d+2} \lambda^2}{n^2 \alpha^2}) (1-\frac{ 2^{2d} \lambda^2}{n^2 \alpha^2})} \biggr\}^n.
\end{align*}
It therefore follows that, for $t \in (0,1]$,
\begin{align}
\label{Eq:Subgaussian}
	\mathbb{P}( T_n &- \mathbb{E} T_n \geq t) \nonumber \\
	&\leq \inf_{\lambda \in (0,n\alpha/2^{d+1})} \exp \Bigl( - \lambda t + \frac{5 \lambda^2}{32n} - n \log \Bigl( 1 - \frac{ 2^{2d+2} \lambda^2}{n^2 \alpha^2} \Bigr) - n \log \Bigl( 1 - \frac{2^{2d} \lambda^2}{n^2 \alpha^2} \Bigr) \Bigr) \nonumber \\
	& \leq \exp \Bigl( - \frac{n \alpha^2 t^2}{2^{2d+4}} + \frac{5 n \alpha^4 t^2}{2^{4d+9}} - n \log \Bigl( 1 - \frac{ \alpha^2 t^2}{2^{2d+6}} \Bigr) - n \log \Bigl(1 - \frac{\alpha^2 t^2}{2^{2d+8}} \Bigr) \Bigr) \nonumber \\
	& \leq \exp \Bigl( - \frac{n \alpha^2 t^2}{2^{2d+6} } \Bigr),
\end{align}
and similarly that the same bound applies to $\mathbb{P}(T_n - \mathbb{E} T_n \leq -t)$. 

Now, writing $\mathcal{X}_{x_0}^*:= \{x \in \mathbb{R}^d : \|x- x_{j^*(x_0)}\|_\infty < h\}$,  we have that
\[
	\mathbb{E}(T_n) = \mathbb{E}\Bigl( W_1 - \frac{1}{2} U_1 \Bigr) = \mathbb{E} \bigl[ \mathbbm{1}_{\{X \in \mathcal{X}_{x_0}^*\}} \{ \eta(X)-1/2\} \bigr].
\]
Define the set
\[
	\Omega_0 := \biggl\{ x_0 \in \mathbb{R}^d : \liminf_{h \rightarrow 0} \frac{\mathbb{P}( \|X-x_0\|_\infty <h )}{h^d} > 0 \biggr\}.
\]
and note that, for $x_0 \in \Omega_0$ we have that $n \alpha^2 \mathbb{P}(X \in \mathcal{X}_{x_0}^*)^2 \geq n \alpha^2 \mathbb{P}(\|X-x_0\|_\infty \leq h/2)^2 \rightarrow \infty$, since we are assuming that $n \alpha^2 h^{2d} \rightarrow \infty$.
We will now establish that $\mathbb{P}(X \in \Omega_0)=1$. By the Lebesgue decomposition theorem \citep[e.g. Theorem 6.10(a),][]{Rudin1987}, there exist measures $\mu,\nu$ on $\mathbb{R}^d$ such that $\mu$ is absolutely continous with respect to Lebesgue measure and $\nu$ is singular with respect to Lebesgue measure, and such that
\[
	\mathbb{P}(X \in A) = \mu(A) + \nu(A)
\]
for all measurable $A \subseteq \mathbb{R}^d$. Define the sets
\[
	\Omega_\mu := \biggl\{ x_0 \in \mathbb{R}^d : \liminf_{h \rightarrow 0} \frac{\mu(B_{x_0}(h))}{h^d} > 0 \biggr\}
\]
and
\[
	\Omega_\nu := \biggl\{ x_0 \in \mathbb{R}^d : \liminf_{h \rightarrow 0} \frac{\nu(B_{x_0}(h))}{h^d} > 0 \biggr\}	
\]
so that $\Omega_0 = \Omega_\mu \cup \Omega_\nu$ and hence that $\mathbb{P}(X \not\in \Omega_0) \leq \mu(\Omega_\mu^c) + \nu( \Omega_\nu^c)$. Since $\mu$ is absolutely continuous, it has a density $f_\mu$ and we have that $\Omega_\mu= \{ x_0 \in \mathbb{R}^d : f_\mu(x_0)>0\}$ and $\mu(\Omega_\mu^c)=0$.

We now turn to the singular part $\nu$ and write $\lambda$ for Lebesgue measure. Since $\nu$ is singular there exists a measurable set $A \subseteq \mathbb{R}^d$ such that $\nu(A^c)=0$ and $\lambda(A)=0$. Fixing an arbitrary $\epsilon>0$ we can always find an open set $O_\epsilon$ such that $A \subseteq O_\epsilon$ and $\lambda(O_\epsilon) < \epsilon$. Now for each $x \in \Omega_\nu^c \cap A$ we may choose $r_x \in (0,1)$ such that $B_x(r_x) \subseteq O_\epsilon$ and $\nu(B_x(5r_x)) \leq \lambda(B_x(5r_x)).$ By the Vitali covering lemma \citep[e.g. Theorem 1.24,][]{EvansGariepy2015} there exists a countable subset $\mathcal{I} \subset \Omega_\nu^c \cap A$ such that the balls $( B_x(r_x) )_{x \in \mathcal{I}}$ are disjoint and such that $\Omega_\nu^c \cap A \subseteq \bigcup_{x \in \mathcal{I}} B_x(5r_x)$. Hence
\begin{align*}
	\nu(\Omega_\nu^c) = \nu( \Omega_\nu^c \cap A) \leq \sum_{x \in \mathcal{I}} \nu( B_x(5r_x)) &\leq \sum_{x \in \mathcal{I}} \lambda( B_x(5r_x)) \\
	&\leq 5^d \sum_{x \in \mathcal{I}} \lambda( B_x(r_x)) \leq 5^d \lambda(O_\epsilon) < 5^d \epsilon.
\end{align*}
Since $\epsilon>0$ was arbitrary, we have that $\nu(\Omega_\nu^c) =0$ and hence that $\mathbb{P}(X \in \Omega_0)=1$.

For $x_0 \in \Omega_0$ we have that $\mathbb{P}(X \in \mathcal{X}_{x_0}^*) >0$ and we may consider the event
\[
	\Omega_1:= \biggl\{ x_0 \in \Omega_0 : \limsup_{h \rightarrow 0} \biggl| \frac{\mathbb{E}(T_n)}{\mathbb{P}(X \in \mathcal{X}_{x_0}^*)} - \{ \eta(x_0) -1/2 \} \biggr| =0 \biggr\}.
\]
By the Lebesgue differentiation theorem \citep[e.g. Lemma 4.1.2,][]{LedrappierYoung1985} we have that $\mathbb{P}(X \in \Omega_1)=1$.  For $x_0 \in \Omega_1$ such that $\eta(x_0) \neq 1/2$ we now have, writing $\delta = \frac{1}{2} |\eta(x_0) - 1/2| \mathbb{P}(X \in \mathcal{X}_{x_0}^*)$, that
\begin{equation}
\label{Eq:Sign}
	\delta^{-1} \mathbb{E} T_n(x_0) \rightarrow 2 \text{ sign}(2\eta(x_0)-1).
\end{equation}
Moreover, for $x_0 \in \Omega_1$, using~\eqref{Eq:Subgaussian} we also have
\begin{align*}
	\bigl\{ \mathbb{P}(&T_n(x_0) < 0 ) - \mathbbm{1}_{\{\eta(x_0) < 1/2\}} \bigr\} \{2 \eta(x_0) -1 \} \bigl( \mathbbm{1}_{\{\mathbb{E} T_n \geq \delta, 2\eta(x_0) > 1\}} +  \mathbbm{1}_{\{\mathbb{E} T_n \leq -\delta, 2\eta(x_0) < 1\}} \bigr) \\
	& \leq \mathbb{P}( T_n(x_0) <0) \mathbbm{1}_{\{\mathbb{E} T_n \geq \delta, 2\eta(x_0) > 1\}} + \mathbb{P}( T_n(x_0) \geq 0) \mathbbm{1}_{\{\mathbb{E} T_n \leq -\delta, 2\eta(x_0) < 1\}} \\
	& \leq \mathbb{P}( |T_n(x_0) - \mathbb{E} T_n(x_0) | \geq \delta) \mathbbm{1}_{\{2\eta(x_0) \neq 1\}} \leq 2 \exp \Bigl( - \frac{n \alpha^2 \delta^2}{2^{2d+6}} \Bigr) \mathbbm{1}_{\{2\eta(x_0) \neq 1\}} \rightarrow 0,
\end{align*}
where for the final convergence statement we have used the facts that $x_0 \in \Omega_0$ and $n \alpha^2 h^{2d} \rightarrow \infty$. It now follows from this and from~\eqref{Eq:Sign} that for $x_0 \in \Omega_1$ we have
\begin{align*}
	\bigl\{ \mathbb{P}(T_n(x_0) &< 0 ) - \mathbbm{1}_{\{\eta(x_0) < 1/2\}} \bigr\} \{2 \eta(x_0) -1 \} \\
	& \leq 2\exp\Bigl( - \frac{n \alpha^2 \delta^2}{2^{2d+6}} \Bigr) \mathbbm{1}_{\{2\eta(x_0) \neq 1\}} + \mathbbm{1}_{\{\mathbb{E} T_n \geq \delta, 2\eta(x_0) < 1\}} \\
	& \hspace{150pt}+ \mathbbm{1}_{\{\mathbb{E} T_n \leq -\delta, 2\eta(x_0) > 1 \}}  + \mathbbm{1}_{\{|\mathbb{E} T_n| < \delta, 2\eta(x_0) \neq 1\}} \\
	& \leq 2\exp\Bigl( - \frac{n \alpha^2 \delta^2}{2^{2d+6}} \Bigr) \mathbbm{1}_{\{2\eta(x_0) \neq 1\}} + 2 \mathbbm{1}_{\{ \frac{\mathbb{E} T_n(x_0)}{ \delta \text{sign}(2 \eta(x_0) -1 )} \leq -1, 2 \eta(x_0) \neq 1\}}\\
	& \hspace{150pt} + \mathbbm{1}_{\{ | \mathbb{E} T_n(x_0) / \delta | < 1, 2\eta(x_0) \neq 1 \}} \rightarrow 0.
\end{align*}
Hence, by the dominated convergence theorem \citep[e.g. Theorem 1.34,][]{Rudin1987}, we have that
\[
	\mathcal{E}_P(C_n) = \mathbb{E} \Bigl[ \bigl\{ \mathbb{P}(T_n(X) < 0 | X ) - \mathbbm{1}_{\{\eta(X) < 1/2\}} \bigr\} \{2 \eta(X) -1 \} \Bigr] \rightarrow 0,
\]
as required.
\end{proof}
\begin{proof}[Proof of Theorem~\ref{Prop:UpperBound}]
We adopt the same notation as in the proof of Theorem~\ref{Thm:UniversalConsistency} and recall that
\[
	\mathbb{E}(T_n) = \mathbb{E}\Bigl( W_1 - \frac{1}{2} U_1 \Bigr) = \int_{\mathcal{X}_{x_0}^*} f(x)\{\eta(x)-1/2\} \,dx.
\]
Now, if $x$ belongs to $B_{x_0}(h/2)$ and given that $\|x_0- x_{j^*(x_0)}\|_\infty < h/2$, we get that $\|x-x_{j^*(x_0)}\|_\infty < h$. This implies that $B_{x_0}(h/2) \subseteq \mathcal{X}_{x_0}^*$, so $\mathbb{P}(X_1 \in \mathcal{X}_{x_0}^*) \geq c_0 \mu(h/2)^d$ whenever $x_0 \in \mathrm{supp}(f)$ and $h/2 \leq r_0$. Thus, if $\eta(x_0) \geq 1/2 + L(2h d^{1/2})^\beta$ we therefore have that
\begin{align}
\label{Eq:Bias}
	\mathbb{E}(T_n) \geq \int_{\mathcal{X}_{x_0}^*} f(x) \{ \eta(x_0) - 1/2& - L(2hd^{1/2})^{\beta} \} \,dx \nonumber \\
	&\geq c_0 \mu (h/2)^d \{\eta(x_0) - 1/2 - L(2hd^{1/2})^{\beta} \},
\end{align}
and similarly if $\eta(x_0) \leq 1/2 - L(2hd^{1/2})^\beta$ we have $\mathbb{E}(T_n) \leq - c_0 \mu (h/2)^d \{1/2 - \eta(x_0) - L(2hd^{1/2})^\beta \}$. We can now bound the excess risk of our classifier by appropriately partitioning $\{ x \in [0,1]^d : \eta(x) \neq 1/2\}$ according to the size of $|\eta(x)-1/2|$. To this end, define
\[
	\mathcal{X}_0:= \bigl\{ x \in [0,1]^d : 0< |\eta(x) - 1/2| \leq 2L(2hd^{1/2})^{\beta} \bigr\}
\]
and, for each $j=1,2,\ldots$,
\[
	\mathcal{X}_j:= \bigr\{ x \in [0,1]^d : 2^j L(2hd^{1/2})^{\beta} <  |\eta(x) - 1/2| \leq 2^{j+1} L(2hd^{1/2})^{\beta} \bigr\}.
\]
Combining~\eqref{Eq:Subgaussian} and~\eqref{Eq:Bias} we can now see that
\begin{align*}
	&\mathcal{E}_P(C_n) = \int_{\mathrm{supp}(f)} f(x_0) \bigl\{ \mathbb{P}(T_n(x_0) < 0 ) - \mathbbm{1}_{\{\eta(x_0) < 1/2\}} \bigr\} \{2 \eta(x_0) -1 \} \,dx_0 \\
	& \leq 4 L (2hd^{1/2})^\beta \mathbb{P}( X \in \mathcal{X}_0) \\
	&\hspace{50pt} + \sum_{j=1}^\infty \int_{\mathcal{X}_j} f(x_0) \{2 \eta(x_0) -1 \} \exp \Bigl( - \frac{n \alpha^2}{2^{2d+6}} [ \mathbb{E}\{T_n(x_0)\} ]^2 \Bigr) \,dx_0 \\
	& \leq 4 L (2hd^{1/2})^\beta \mathbb{P}( X \in \mathcal{X}_0) \\
	&\hspace{50pt}+ \sum_{j=1}^\infty 2^{j+2} L (2hd^{1/2})^\beta \mathbb{P}(X \in \mathcal{X}_j) \exp \Bigl( - \frac{n \alpha^2}{2^{4d+6-2\beta}} d^\beta c_0^2 \mu^2 L^2 h^{2d+2\beta} 2^{2j} \Bigr).
\end{align*}
This motivates the choice of bandwidth $h = (n \alpha^2 L^2 \mu^2)^{-1/(2d + 2\beta)}$. With this choice, there exists $A = A(d,\beta) >0$ such that
\begin{align*}
	\mathcal{E}_P(C_n) \leq \frac{A}{2^{\beta(1+\gamma)}} C_0 (Lh^\beta)^{1+\gamma} \leq A C_0 \biggl\{ \frac{L^d}{(n \alpha^2 \mu^2)^{\beta/2}} \biggr\}^\frac{1+\gamma}{d+\beta}.
\end{align*}
\end{proof}
\begin{proof}[Proof of Theorem~\ref{Prop:LowerBound}]
This result is proved by applying Theorem~3 of~\citet{DJW18}, which provides a private version of Assouad's lemma, to the construction used to establish the lower bound in~\citet{AudibertTsybakov07}. For $q \in \mathbb{N}$ define $\mathcal{J}:=\{0,1,\ldots,q\}^d$ and consider the grid
\[
	G_q:=\biggl\{ x_j= \biggl( \frac{2j_1+1}{4q}, \ldots, \frac{2j_d+1}{4q} \biggr) : j \in \mathcal{J} \biggr\}.
\]
Write $n_q(x):=\mathrm{argmin}_{x' \in G_q} \|x-x'\|$ and for each $j \in \mathcal{J}$ let $\tilde{\mathcal{X}}_j=\{x \in [0,1/2]^d : j^*(x) = j \}$. For an integer $m \leq q^d$ let $\{j^{(1)},\ldots,j^{(m)}\} \subseteq \mathcal{J}$ be any subset of size $m$, write $\mathcal{X}_i:=\tilde{\mathcal{X}}_{j^{(i)}}$ for $i=1,\ldots,m$ and write $\mathcal{X}_0:= [0,1]^d \setminus \bigcup_{i=1}^m \mathcal{X}_i$ so that $\mathcal{X}_0, \mathcal{X}_1, \ldots, \mathcal{X}_m$ forms a partition of $[0,1]^d$. Further, let $u: [0,\infty) \rightarrow [0,\infty)$ be nonincreasing and smooth such that
\[
	u(x)=1 \text{ for } x \in [0,1/8] \quad \text{ and } \quad u(x)=0 \text{ for } x \in [1/4, \infty).
\]
Define $\phi(x)=L C_\phi u(\|x\|)$ with $C_\phi$ chosen small enough that $\phi$ is $(\beta,L)$-H\"older smooth.

For each $\sigma \in \{-1,+1\}^m$ we now define a distribution $P_\sigma$ of $(X,Y)$. Each of these distributions has the same marginal distribution which, for some $A_0 \subset \mathcal{X}_0$ sufficiently regular with positive Lebesgue measure and $w \in (0,1/m]$, has a density given by
\[
	f(x):= \left\{ \begin{array}{lr} (8q)^dw/V_d & \text{if } x \in B_{x_{j^{(i)}}}(1/(8q)) \text{ for some } i=1,\ldots, m \\ (1-mw)/\lambda(A_0) & \text{if } x \in A_0 \\ 0 & \text{otherwise} \end{array} \right.,
\]
where we write $V_d:=\lambda(B_0(1))=\frac{\pi^{d/2}}{\Gamma(1+d/2)}$. We define the conditional distribution of $X|Y$ by writing $\varphi(x) = q^{-\beta} \phi(q(x-n_q(x)))$, $\eta_\sigma(x):=1/2$ when $x \in \mathcal{X}_0$ and
\[
	\eta_\sigma(x):= \frac{1+ \sigma_i \varphi(x)}{2}
\]
when $x \in \mathcal{X}_i$ for $i=1,\ldots,m$. With $C_\phi$ chosen sufficiently small we have that $\eta_\sigma$ is $(\beta,L)$-H\"older smooth for all values of $\sigma \in \{-1,+1\}^m$. Moreover
\begin{align*}
	\mathbb{P}_\sigma(0 < |\eta_\sigma(X) -1/2| \leq t) = m \mathbb{P}_\sigma(0 < \phi(q(X-x_{j^{(1)}})) \leq 2t q^{\beta}) =  mw \mathbbm{1}_{\{ 2tq^\beta \geq L C_\phi \}},
\end{align*}
so that the $(\gamma,C_0)$-margin condition is satisfied when $mw \leq C_0 ( \frac{L C_\phi}{2 q^\beta})^{\gamma}$.

Now let $C_n$ be any classifier only depending on $Z_1,\ldots,Z_n$ and let $Q \in \mathcal{Q}_\alpha$ be any privacy mechanism that satisfies $\alpha$-LDP. If $(X,Y) \sim P_\sigma$ then write $M_\sigma^{(n)}$ for the distribution of the privatised data $(Z_1,\ldots,Z_n)$. For any $\sigma \in \{-1,+1\}^m$, $k=1,\ldots,m$ and $r \in \{-1,0,+1\}$ we will write $\sigma_{k,r}:=( \sigma_1, \ldots, \sigma_{k-1}, r, \sigma_{k+1}, \ldots, \sigma_m)$ and we will write $L_{k,r}$ for the likelihood ratio of $M_{\sigma_{k,r}}^{(n)}$ to $M_{\sigma_{k,0}}^{(n)}$. Then
\begin{align*}
	&\sup_{P \in \mathcal{P}(\theta)} \mathcal{E}_P(C_n) \geq 2^{-m} \sum_{\sigma \in \{-1,+1\}^m} \mathcal{E}_{P_\sigma}(C_n) \\
	&= 2^{-m} \sum_{\sigma \in \{-1,+1\}^m} \sum_{k=1}^m \sigma_k \int_{\mathcal{X}_k} \{ \mathbb{P}_\sigma(C_n(x)=0) - \mathbbm{1}_{\{ \sigma_k<0 \}} \} \varphi(x) f(x) \,dx \\
	& \geq 2^{-m} \sum_{\substack{ \sigma \in \{-1,+1\}^m : \\ \sigma_k=1}} \sum_{k=1}^m \int_{\mathcal{X}_k} \mathbb{E}_{\sigma_{k,0}} \bigl\{ L_{k,1}\mathbbm{1}_{\{C_n(x)=0\}} + L_{k,-1} \mathbbm{1}_{\{C_n(x)=1\}} \bigr\}  \varphi(x) f(x) \, dx \\
	& \geq 2^{-m} \sum_{\substack{ \sigma \in \{-1,+1\}^m : \\ \sigma_k=1}} \sum_{k=1}^m \int_{\mathcal{X}_k} \mathbb{E}_{\sigma_{k,0}} \{ \min(L_{k,1}, L_{k,-1}) \} \varphi(x) f(x) \, dx \\
	&= \frac{1}{2} \sum_{k=1}^m \{ 1- d_\mathrm{TV}(M_{+k}^{(n)}, M_{-k})^{(n)}\} \int_{\mathcal{X}_k} \varphi(x) f(x) \, dx,
\end{align*}
where we write $M_{rk}^{(n)} =2^{-(m-1)} \sum_{\sigma : \sigma_k=r} M_\sigma^{(n)}$. With our choice of $\varphi$ we have that
\[
	\int_{\mathcal{X}_k} \varphi(x) f(x) \,dx =  \frac{(8q)^d w}{V_d} \int_{B_0(1/(8q))} q^{-\beta} \phi(qx) \,dx = w L C_\phi q^{-\beta}.
\]
Now write $P_{rk}=2^{-(m-1)} \sum_{\sigma: \sigma_k=r} P_\sigma$. Using Cauchy--Schwarz, Pinsker's inequality and Theorem~3 of \citet{DJW18} we can see that
\begin{align*}
	\biggl\{ \frac{1}{m} \sum_{k=1}^m d_\mathrm{TV}&(M_{+k}^{(n)}, M_{-k}^{(n)}) \biggr\}^2  \leq  \frac{1}{4m} \sum_{k=1}^m \{ d_\mathrm{KL}(M_{+k}^{(n)},M_{-k}^{(n)}) + d_\mathrm{KL}(M_{-k}^{(n)},M_{+k}^{(n)})\}  \\
	& \leq \frac{2n(e^\alpha-1)^2}{m} \sum_{k=1}^m d_\mathrm{TV}(P_{+k}, P_{-k})^2 = 2n(e^\alpha-1)^2 w^2L^2C_\phi^2 q^{-2\beta}.
\end{align*}

Take $q=\lceil C_1 (n\alpha^2)^{1/(2\beta+2d)} \rceil$, $w=c_2 q^{-d}$ and $m= \lfloor C_3 q^{d-\beta \gamma} \rfloor$ for some constants $C_1,c_2,C_3$. If $C_1$ and $C_3$ are chosen large enough then we have $q \geq 1$ and $m \geq 2$. Moreover, $mw \leq c_2 C_3 q^{-\alpha \beta}$ so, for $c_2$ small enough we have both $w \in (0,1/m]$ and $mw \leq C_0(\frac{LC_\phi}{2 q^\beta})^\gamma$. Furthermore, if $c_2$ is chosen small enough then we have $2n(e^\alpha-1)^2 w^2 L^2 C_\phi^2 q^{-2\beta} \leq 8 n \alpha^2 c_2^2 L^2 C_\phi^2 q^{-2\beta-2d} \leq 8 c_2^2 L^2 C_\phi^2 \leq 1/2$. For these choices of $q,w,m$ we then have
\begin{align*}
	\sup_{P \in \mathcal{P}(\theta)} \mathcal{E}_P(C_n) \geq \frac{mwLC_\phi }{4q^\beta} &\geq \frac{C_3 c_2  L C_\phi}{8q^{\beta(1+\gamma)}} \geq 2^{-\beta(1+\gamma)-3} C_3 c_2 L C_\phi (n \alpha^2)^{-\frac{\beta(1+\gamma)}{2 \beta + 2d}}.
\end{align*}
\end{proof}

\end{document}